\theoremstyle{plain}
\newtheorem{Main}{Theorem}
\newtheorem{Thm}{Theorem}[section]
\newtheorem{Prop}[Thm]{Proposition}
\newtheorem{Lem}[Thm]{Lemma}
\theoremstyle{remark}
\newtheorem{Rem}[Thm]{Remark}
\def\max{\operatorname{max}}
\def\min{\operatorname{min}}
\def\supp{\operatorname{supp}}
\def\W{\operatorname{W}}
\begin{document}
	
	\title[]
	{the dimension of the non-dense orbit set for expanding maps}
	\author{Congcong Qu}
	\address{Congcong Qu, College of Big Data and software Engineering, Zhejiang Wanli University, Ningbo, 315107, Zhejiang, P.R.China}
	\email{congcongqu@foxmail.com}
	\subjclass[2000]{37C45,37D05,37D20}
	
	\keywords{Hausdorff dimension, repeller, non-dense orbit}

	\begin{abstract}
		In this paper, we give a new proof for  the Hausdorff dimension of the non-dense orbit set for expanding maps. This proof is based on the sharp lower bound of the Hausdorff dimension of  repellers given by Cao, Pesin and Zhao in~\cite{caopesinzhao2019}. Besides, we prove that the Carath$\acute{\text{e}}$odory singular dimension of non-dense orbit set equals that of the repeller.
	\end{abstract}
	
	\maketitle
	
	
	\section{Introduction}
	Suppose ~$M$ is a compact~Riemannian manifold and~$f:M\rightarrow M$ is a continuous map. Given an~$f-$invariant set~$\Lambda\subset M$ and a subset~$A\subset \Lambda$, the ~$A-$exceptional set in~$\Lambda$ with respect to $f|_{\Lambda}$ is defined as~$E^{+}_{f|_{\Lambda}}(A)=\{x\in \Lambda:\overline{\mathcal{O}_{f}(x)}\cap A={\varnothing}\}$, where $\mathcal{O}_{f}(x)$ denotes the forward orbit of $x$ under the action of $f$. Denote by $ND(f)$ the set of points with non-dense forward orbit under $f$. By definition, $E^+_{f|_M}(y)\subset ND(f)$ for any $y\in M$. If $f$ preserves an ergodic measure of full support, then $ND(f)$ has measure $0$. It's natural to consider the dimension of the non-dense orbit set.
	
	In this paper, we consider the  dimension of the non-dense orbit set for $C^{1+\gamma}$ expanding maps. Precisely, we prove the following results. Here we use $E^{+}_{f}(y)$ shortly for $E^{+}_{f|_{M}}(y)$. We use $dim_C Z$ to denote Carath$\acute{\text{e}}$odory singular dimension of the set $Z$, see subsection \ref{C dimension} for definition.
	\begin{Main}\label{expanding}
		Let $M$ be a $d-$dimensional Riemannian manifold and $f: M\rightarrow M$ be a $C^{1+\gamma}$ expanding map. Suppose that $\mu$ is an ergodic measure for $f$ which is absolutely continuous to the Lebesgue measure. Then for any $y\in M$, we have $$dim_{H} E^{+}_{f}(y)=dim_{H}\mu=d.$$
	\end{Main}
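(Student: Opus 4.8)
I would first dispose of the easy inequalities. Since $E^{+}_{f}(y)\subset M$ we have $dim_{H}E^{+}_{f}(y)\le dim_{H}M=d$, and $dim_{H}\mu\le d$ trivially; conversely $\mu$ is a probability measure with $\mu\ll\mathrm{Leb}$, so any set of positive $\mu$-measure carries positive Lebesgue measure and hence has Hausdorff dimension $d$, whence $dim_{H}\mu=d$. Thus the theorem reduces to the single lower bound $dim_{H}E^{+}_{f}(y)\ge d$.

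The plan for this is to exhaust $E^{+}_{f}(y)$ by genuine repellers and invoke the sharp lower estimate of~\cite{caopesinzhao2019}. For $r>0$ set $\Lambda_{r}=\{x\in M:f^{n}(x)\notin B(y,r)\text{ for all }n\ge0\}$, a compact forward $f$-invariant set on which $f$ is expanding; every point of $\Lambda_{r}$ has forward orbit disjoint from the open ball $B(y,r)$, so $\Lambda_{r}\subset E^{+}_{f}(y)$ (in fact $E^{+}_{f}(y)=\bigcup_{r>0}\Lambda_{r}$). All $f$-invariant measures of $\Lambda_{r}$ are carried by the fully invariant $C^{1+\ga}$ repeller $\Gamma_{r}:=\bigcap_{n\ge0}f^{n}(\Lambda_{r})\subset E^{+}_{f}(y)$ (and a locally maximal such repeller, should one be needed, is furnished by the construction in the last paragraph), and for it the sharp lower bound of~\cite{caopesinzhao2019} yields
\[
dim_{H}\Gamma_{r}\ \ge\ dim_{C}\Gamma_{r}\ \ge\ dim_{L}\nu\qquad\text{for every ergodic }\nu\in\mathcal{M}_{f}(\Gamma_{r}),
\]
where $dim_{L}\nu$ denotes the Lyapunov dimension. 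Hence it suffices to exhibit, for arbitrarily small $\vep>0$, a radius $r>0$ and an ergodic $\nu_{r}\in\mathcal{M}_{f}(\Lambda_{r})$ with $dim_{L}\nu_{r}>d-\vep$.

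This is where absolute continuity of $\mu$ enters. By Rokhlin's entropy formula, $h_{\mu}(f)=\int\log|\det Df|\,d\mu=\sum_{i}\la_{i}(\mu)$, the $\la_{i}(\mu)$ being the Lyapunov exponents of $\mu$, so the entropy is maximal relative to the exponents and $dim_{L}\mu=d$. More generally, put $\psi=-\log|\det Df|$ and $\eta(\nu)=\sum_{i}\la_{i}(\nu)-h_{\nu}(f)=-(h_{\nu}(f)+\int\psi\,d\nu)\ge0$ (Ruelle's inequality); since $f$ is expanding, the Lyapunov exponents of all $f$-invariant measures lie in a fixed interval $[a,b]$ with $a>0$, and an elementary computation then gives $\eta(\nu)<a\Rightarrow dim_{L}\nu\ge d-\eta(\nu)/a$. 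Moreover $P_{M}(\psi)=\sup_{\nu}(h_{\nu}(f)+\int\psi\,d\nu)=0$, attained at $\mu$. So the task becomes: produce $\nu_{r}\in\mathcal{M}_{f}(\Lambda_{r})$ with $\eta(\nu_{r})\to0$ as $r\to0$ — that is, near-equilibrium states for $\psi$ whose support misses $B(y,r)$.

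To build them I would fix a Markov partition $\mathcal{R}$ of $(M,f)$ and pass to the symbolic coding $\pi\colon(\Sigma_{A},\sigma)\to(M,f)$; then $\psi$ lifts to a H\"older potential $\hat\psi=\psi\circ\pi$ with $P(\hat\psi)=0$. By bounded distortion, for small $r$ the ball $B(y,r)$ is covered by a bounded number of cylinders of some level $k(r)\to\infty$; let $\Sigma'_{r}\subset\Sigma_{A}$ be the subshift obtained by forbidding those finitely many blocks, so $\pi(\Sigma'_{r})\subset\Lambda_{r}$. Taking $\nu_{r}$ to be the push-forward under $\pi$ of an equilibrium state of $\hat\psi|_{\Sigma'_{r}}$ — which, being Gibbs, charges neither $\pi^{-1}\big(\bigcup_{n}f^{-n}(\partial\mathcal{R})\big)$ nor $B(y,r)$, so $\pi$ preserves its entropy — one gets $\eta(\nu_{r})=-P_{\Sigma'_{r}}(\hat\psi)$, and chaining the displays above yields $dim_{H}E^{+}_{f}(y)\ge dim_{H}\Gamma_{r}\ge dim_{L}\nu_{r}\ge d-\eta(\nu_{r})/a$. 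The one genuinely substantial step, which I expect to be the main obstacle, is the convergence $P_{\Sigma'_{r}}(\hat\psi)\to P(\hat\psi)=0$ as $r\to0$: the absence of ``escape of pressure'' as the hole $B(y,r)$ collapses to a point. This can be quoted from the theory of open dynamical systems or proved directly — the spectral gap of the Ruelle transfer operator of a $C^{1+\ga}$ expanding map makes forbidding a block of length $k$ an $O(\th^{k})$ perturbation of the leading eigenvalue for some $\th<1$, and $k(r)\to\infty$. Finally, a minor separate nuisance is that $\mu$ need not have full support; in that case one simply runs the whole argument on the repeller $\supp\mu$, where $dim_{H}=dim_{H}\mu=d$, $\mu$ is fully supported, and $E^{+}_{f|_{\supp\mu}}(y)\subset E^{+}_{f}(y)$.
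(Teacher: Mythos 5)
Your overall strategy is sound and genuinely different from the paper's. The paper gets its $y$-avoiding repellers from the Katok-type approximation result (Theorem~\ref{approximate}), applied to the acip $\mu$ itself, and then does explicit bookkeeping with the root $s^{\ast}$ of the super-additive Bowen equation from Theorem~\ref{lower bound}; you instead build the avoiding sets directly as maximal invariant sets in the complement of a shrinking hole $B(y,r)$, pass to a Markov coding, and obtain measures of nearly full Lyapunov dimension from a ``no escape of pressure'' statement for the potential $-\log|\det Df|$ (an open-systems/transfer-operator fact, quotable for H\"older potentials on mixing subshifts of finite type). This is essentially Urba\'nski's original scheme upgraded to the non-conformal setting via \cite{caopesinzhao2019}, and it trades the paper's entropy-and-exponent approximation of $\mu$ for a perturbation statement about pressure; what it buys is that $\mu$ is only used through $P(-\log|\det Df|)=0$ and Lemma~\ref{dim of measure}, while the paper's route avoids symbolic dynamics and spectral perturbation altogether.

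Two points need repair. First, the chain $\dim_{H}\Gamma_{r}\geq\dim_{C}\Gamma_{r}\geq\dim_{L}\nu$ is not what \cite{caopesinzhao2019} gives: the Carath\'eodory singular dimension is the root of the \emph{sub-additive} Bowen equation (Theorem~\ref{singular-valued dimension}) and is an upper-bound-type quantity, in general $\geq\dim_{H}$, so the first inequality is backwards. The inequality you actually need, $\dim_{H}\Gamma_{r}\geq d-\eta(\nu)/a$ for every ergodic $\nu$ on $\Gamma_{r}$ with $\eta(\nu)<a$, does follow, but from Theorem~\ref{lower bound}: if $s=d-\eta(\nu)/\lambda_{d}(\nu)$ then $h_{\nu}-\sum_{i=1}^{d-1}\lambda_{i}(\nu)-(s-d+1)\lambda_{d}(\nu)=0$, so $P_{\text{sup}}(s)\geq 0$ and hence $s^{\ast}\geq s$; cite that, not $\dim_C$. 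Second, Theorem~\ref{lower bound} is stated for repellers in the sense of an isolating open neighborhood, and neither $\Lambda_{r}$ (the maximal invariant set in the \emph{closed} set $M\setminus B(y,r)$) nor $\Gamma_{r}$ is obviously locally maximal; your parenthetical appeal to the symbolic construction is the right fix, but the image of the reduced subshift must actually be exhibited as the maximal invariant set of an open set (forbid the finitely many level-$k$ closed cylinders meeting $\overline{B(y,r)}$ and handle the partition-boundary identifications), and the projected equilibrium state must be checked to retain its entropy — you note the Gibbs property gives zero mass to the boundary set, which is the correct reason. With these repairs, together with the quoted convergence $P_{\Sigma'_{r}}(\hat\psi)\to 0$, the argument closes; note for fairness that the paper itself is terse at the analogous point, namely why the Katok set can be taken disjoint from $\{y\}$ and locally maximal.
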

	\begin{Main}\label{expanding2}
		Let $M$ be a $d-$dimensional Riemannian manifold and $f: M\rightarrow M$ be a $C^{1+\gamma}$ map admitting a repeller $\Lambda$. Then for any $y\in \Lambda$, we have $$dim_{C} E^{+}_{f|_{\Lambda}}(y)=dim_{C} \Lambda.$$
	\end{Main}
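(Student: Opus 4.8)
The inclusion $E^{+}_{f|_{\Lambda}}(y)\subseteq\Lambda$ together with the monotonicity of the Carath\'eodory singular dimension gives $dim_{C}E^{+}_{f|_{\Lambda}}(y)\le dim_{C}\Lambda$, so the content of the statement lies entirely in the reverse inequality, and it is enough to exhibit, for each $\epsilon>0$, a subset of $E^{+}_{f|_{\Lambda}}(y)$ of Carath\'eodory singular dimension larger than $dim_{C}\Lambda-\epsilon$. Fix $\epsilon>0$. A by-product of the sharp lower bound of \cite{caopesinzhao2019} is that $dim_{C}\Lambda=\sup dim_{C}K$, the supremum being taken over topologically transitive sub-repellers $K\subseteq\Lambda$ (and, after replacing $f$ by an iterate, $f|_{K}$ may be taken topologically mixing); fix such a $K$ with $dim_{C}K>dim_{C}\Lambda-\epsilon$. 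If $y\notin K$ then $\overline{\mathcal{O}_{f}(x)}\subseteq K$ for every $x\in K$, whence $K\subseteq E^{+}_{f|_{\Lambda}}(y)$ and there is nothing more to prove; so assume $y\in K$.

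Pass to a symbolic model of $K$: fix a Markov partition $\mathcal{R}$ of $K$ of small diameter, coding $f|_{K}$ by a topologically mixing subshift of finite type $(\Sigma_{A},\sigma)$, and --- perturbing $\mathcal{R}$ slightly if necessary so that $y$ lies in the interior of one of its elements --- observe that for all small $\rho>0$ the set $K_{\rho}:=\{x\in K:\dist(f^{n}x,y)\ge\rho\ \text{for all}\ n\ge0\}$ is a sub-repeller of $K$ coded by the subshift of finite type $\Sigma_{A,\rho}\subseteq\Sigma_{A}$ obtained by forbidding the finite family of words that correspond to the elements of $\mathcal{R}$ meeting $B(y,\rho)$. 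By the discussion of subsection~\ref{C dimension} and \cite{caopesinzhao2019}, for any such repeller $J$ the number $dim_{C}J$ equals the unique root $s(J)$ of the sub-additive Bowen equation $P_{J}(\Phi_{s})=0$, where $\Phi_{s}$ is the sub-additive potential built from the singular value function of the inverse branches of $f$ and $P_{J}$ denotes the sub-additive topological pressure; the function $s\mapsto P_{J}(\Phi_{s})$ is continuous and strictly decreasing, and is positive precisely for $s<s(J)$. Since $K_{\rho}\subseteq E^{+}_{f|_{\Lambda}}(y)$ and $dim_{C}$ is monotone, the theorem will follow once we show $s(K_{\rho})\to s(K)$ as $\rho\to0^{+}$.

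This convergence is the heart of the matter, and the point at which \cite{caopesinzhao2019} enters decisively. Fix $s$ slightly below $dim_{C}K$, so that $P_{K}(\Phi_{s})>0$. As $\rho\to0$ the forbidden family of cylinders shrinks down to the single point $y$, which carries zero mass for every equilibrium state of every $\Phi_{s'}$ on $K$ (these being non-atomic); the escape-rate principle for topologically mixing subshifts of finite type --- in the sub-additive form obtained by combining the classical escape-rate (open-systems) estimate with the continuity of the sub-additive topological pressure proved in \cite{caopesinzhao2019} --- then yields $P_{K_{\rho}}(\Phi_{s})\to P_{K}(\Phi_{s})$, hence $P_{K_{\rho}}(\Phi_{s})>0$ for all sufficiently small $\rho$, hence $s(K_{\rho})>s$. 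As $s$ may be chosen arbitrarily close to $dim_{C}K$, this gives $s(K_{\rho})\to s(K)=dim_{C}K$ as $\rho\to0^{+}$. Consequently, for a suitable small $\rho$ we have $dim_{C}E^{+}_{f|_{\Lambda}}(y)\ge dim_{C}K_{\rho}=s(K_{\rho})>dim_{C}K-\epsilon>dim_{C}\Lambda-2\epsilon$; letting $\epsilon\to0$ and recalling the reverse inequality from the first paragraph yields $dim_{C}E^{+}_{f|_{\Lambda}}(y)=dim_{C}\Lambda$.

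I expect the escape-rate step to be the only genuine obstacle, and two technical points there deserve care. First, the survivor subshift $\Sigma_{A,\rho}$ must remain rich enough --- ideally still topologically transitive, or at least still supporting equilibrium-type measures of nearly full sub-additive pressure --- since forbidding words can in principle destroy irreducibility; this is arranged by taking $\mathcal{R}$ fine (equivalently, by working with a high iterate and long cylinders), so that the hole $B(y,\rho)$ is a union of cylinders of arbitrarily small equilibrium measure, which a single shrinking metric ball always allows. Second, the quantity being controlled is the root of a genuinely \emph{sub-additive} pressure equation rather than a topological entropy, so the classical open-systems results cannot be invoked verbatim; the sub-additive layer is exactly what the continuity theorem of \cite{caopesinzhao2019} provides, and reconciling that continuity with the shrinking-hole estimate is the one genuinely new analytic input required. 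The remaining, peripheral points --- $y$ lying on the boundary of $\mathcal{R}$, or $\Lambda$ itself failing to be transitive --- are routine, handled respectively by enlarging the hole slightly and by the reduction to $K$ carried out above.
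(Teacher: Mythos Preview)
Your approach is genuinely different from the paper's, and the gap you yourself flag is real and not filled by the references you invoke.

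The paper does not go through open systems or escape rates at all. Instead it argues as follows: by the variational principle for sub-additive pressure there is an ergodic equilibrium state $\mu^{\ast}$ on $\Lambda$ realizing $P(f|_{\Lambda},\alpha_{0})=0$; then Katok's approximation theorem (Theorem~\ref{approximate}) produces, for each small $\varepsilon>0$, a sub-repeller $\Lambda_{\varepsilon}\subset\Lambda$ with $\Lambda_{\varepsilon}\cap\{y\}=\varnothing$ whose entropy and Lyapunov exponents are $\varepsilon$-close to those of $\mu^{\ast}$. The rest is arithmetic: one solves the Bowen equation on $\Lambda_{\varepsilon}$ explicitly in terms of entropy and exponents, compares with the corresponding expression for $\mu^{\ast}$, and reads off $\dim_{C}\Lambda_{\varepsilon}\ge\alpha_{0}-O(\varepsilon)$. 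The point is that the Katok construction already gives you a sub-repeller avoiding $y$, so no hole-cutting or survivor-set analysis is needed.

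By contrast, your argument hinges on showing $P_{K_{\rho}}(\Phi_{s})\to P_{K}(\Phi_{s})$ as $\rho\to0$. You attribute this to ``the continuity of the sub-additive topological pressure proved in \cite{caopesinzhao2019}'', but the continuity theorem there concerns continuity of $P(\cdot,\Phi_{s})$ under $C^{1}$ perturbation of the \emph{map}, not continuity under shrinking the \emph{domain} by removing a metric hole; these are unrelated statements, and the latter is not established in that paper. A sub-additive escape-rate principle of the kind you need would indeed be ``genuinely new analytic input'', as you say---but then your proposal is an outline, not a proof. Moreover, your preliminary reduction ``$\dim_{C}\Lambda=\sup_{K}\dim_{C}K$ over transitive sub-repellers'' is not a stated by-product of \cite{caopesinzhao2019} either; making it precise already requires the equilibrium-state-plus-Katok-approximation step that constitutes the paper's entire argument, at which point the escape-rate machinery becomes superfluous.

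In short: the paper's route is both shorter and complete, since Katok's theorem delivers the $y$-avoiding sub-repeller directly; your route would buy a more ``geometric'' picture via survivor sets, but at the cost of proving a sub-additive open-systems pressure estimate that is not in the literature you cite.
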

	For the history of the research on the dimension of the exceptional set, one can refer to ~\cite{CG1, CG2}. Now we state the differences of our results with the existing ones. The first result for the dimensions of the exceptional sets for expanding maps and hyperbolic diffeomorphsims was obtained by  Urb$\acute{\text{a}}$nski in \cite{Urbanski}. He showed the results of our Main Theorem ~\ref{expanding}  and for Main Theorem ~\ref{expanding2} in conformal case. Notice that for conformal maps, the Hausdorff dimension and the Carath$\acute{\text{e}}$odory singular dimension coincide. He showed the results by a good lower bound of the Hausdorff dimension of a suitable invariant measure supported on the manifold, i.e. volume measure for expanding maps and the Gibbs measure for Anosov diffeomorphisms respectively. And Tseng claimed in \cite{Tseng1} there is a minor gap in Urb$\acute{\text{a}}$nski's proof for the lower bound of the Hausdorff dimension of the measure, and gave a modification of that by playing the Schmidt game, which is developed by Schmidt in \cite{Schmidt}, for expanding endomorphisms on the circle in~\cite{Tseng1} and for certain Anosov diffeomorphisms on the two dimensional torus in ~\cite{Tseng2}. Subsequent work is due to Wu in ~\cite{Wu1, Wu2, Wu3}. His results are based on the tool of Schmidt game and the modified Schmidt game which was introduced by Kleinbock and Weiss in \cite{KW}. Wu extended Tseng's results to the general expanding endomorphisms and partially hyperbolic diffeomorphisms admitting unstable directions.

	Other interesting results are due to Campos and Gelfert in ~\cite{CG1,CG2}. In ~\cite{CG1} and ~\cite{CG2}, for conformal $C^{1+\gamma}$ endomorphisms and diffeomorphisms admitting a locally maximal invariant set, Campos and Gelfert proved that for a set with either small entropy or dimension, the exceptional set of such a set is large in the sense of the Hausdorff dimension and entropy. Their work was heavily based on Dolgopyat's result on entropy in \cite{Do} on the symbolic spaces. They constructed repellers and hyperbolic sets inside the locally maximal invariant set respectively to approximate the entropy and the Lyapunov exponents. The semi-conjugacy and conjugacy respectively of the repellers and hyperbolic sets to the symbolic systems yields the result for the entropy. And the relation of the Hausdorff dimension, entropy and the Lyapunov exponents by Young~\cite{Young} gives the results of dimension. The first author and Juan Wang extended these results to average conformal case in~\cite{QW}.
	
	Our proof of Theorem~\ref{expanding} is based on the lower bound of the Hausdorff dimension of a repeller obtained by Cao, Pesin and Zhao in \cite{caopesinzhao2019}. We briefly state our proof here. For an ergodic measure $\mu$ for $f$ which is absolutely continuous with the Lebesgue measure and a given point $y\in M$, we can construct a sequence of repellers $\{\Lambda_n\}_{n\in\mathbb{N}}$  to approximate the measure $\mu$ and  $\{\Lambda_n\}_{n\in\mathbb{N}}$ can be chosen to avoid the point $y$ and hence $\Lambda_n \subset E^{+}_{f}(y)$. Since $\mu$ is absolutely continuous with respect to the Lesbegue measure, the Pesin's formula holds. We can use this to deduce that the Hausdorff dimension of $\{\Lambda_n\}_{n\in\mathbb{N}}$ are close to the dimension of the manifold via the lower bound of the Hausdorff dimension of the repellers obtained in~\cite{caopesinzhao2019}. By the monotonicity of the Hausdorff dimension, we can show that $dim_H E^{+}_{f}(y)$, $dim_H \mu$ and the dimension of the manifold coincide.
	
	The results in Theorem~\ref{expanding2} is brand new. Cao, Pesin and Zhao \cite{caopesinzhao2019} obtained that the Carath$\acute{\text{e}}$odory singular dimension of the repeller is exactly the unique root of the Bowen's equation involving the sub-additive topological pressure of the sub-additive singular-valued potential. Thus for a given repeller $\Lambda$ and $y\in \Lambda$, we can find a sequence of repellers $\Lambda_n\subset \Lambda$ and $\Lambda_n\cap \{y\}=\varnothing$, and then use  the Carath$\acute{\text{e}}$odory singular dimension of $\Lambda_n$ to approximate that of $\Lambda$.

	This paper is organized as follows. In section \ref{Preliminary}, we recall some basic definitions and preliminary results. In section \ref{proof}, we give the details of the proof of the main result.
	\section{Some definitions and preliminary results}\label{Preliminary}
	In this section, we recall some notions used in our proof and state some preliminary results.

	\subsection{Repellers for expanding maps.} Let $M$ be a $d-$dimensional compact $C^{\infty}$ Riemannian manifold  and $f:M\rightarrow M$ be a $C^{1}$ map. Suppose $\Lambda$ is a compact $f-$invariant subset of $M$. We say $\Lambda$ is {\it a repeller for $f$ and $f$ is expanding} if there exists an open neighborhood $U$ of $\Lambda$ such that $\Lambda=\{x\in U: f^{n}(x)\in U, \text{for~all}~n\in\mathbb{N}\}$ and there is $\lambda>0$ such that for all $x\in \Lambda$ and $v\in T_{x}M$, we have $\|D_{x}f (v)\|\geq \lambda \|v\|$, where $\|\cdot\|$ is the norm induced by the Riemannian metric on $M$.
	\subsection{Metric entropy, topological entropy and the classical topological pressure}
	For the definitions of the metric entropy, topological entropy and the classical topological pressure, one can refer to ~\cite{Pesin} or~\cite{WALTERS}. Here we recall the variational principle of the topological entropy.
	
	Consider a continuous transformation $f: X\to X$ of a compact metric space $X$ equipped with a metric $d$. Suppose that $\Lambda\subset X$ is an $f-$invariant compact set. For an invariant measure $\mu$ for $f$ which is supported on $\Lambda$, we denote by $h_{\mu}(f|_{\Lambda})$ the metric entropy and by $h_{\text{top}}(f|_{\Lambda})$ the topological entropy on $\Lambda$. Classical result shows the following variational principle holds. For the proof, one can refer to ~\cite{WALTERS}.
	\begin{Thm}[Variational principle]
		\begin{align*}
			h_{\text{top}}(f|_{\Lambda})=\sup\{h_{\mu}(f|_{\Lambda})|\supp(\mu)\subset\Lambda,~\mu~\text{is~ergodic}\}.
		\end{align*}
	\end{Thm}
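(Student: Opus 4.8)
The identity packages together the two directions of the classical variational principle with an appeal to the ergodic decomposition, and I would prove it accordingly. For the inequality $\sup\{h_\mu(f|_\Lambda)\colon\mu\ \text{ergodic}\}\le h_{\text{top}}(f|_\Lambda)$ it suffices to check $h_\mu(f|_\Lambda)\le h_{\text{top}}(f|_\Lambda)$ for \emph{every} $f$-invariant Borel probability measure $\mu$ with $\supp(\mu)\subset\Lambda$. This is the routine direction: given a finite Borel partition $\xi$ of $\Lambda$, approximate each atom from inside by a compact subset to obtain a partition $\eta$ with $H_\mu(\xi\mid\eta)$ as small as wished and whose non-trivial atoms are pairwise separated by a positive distance; a Lebesgue-number argument then bounds the number of non-empty atoms of $\bigvee_{i=0}^{n-1}f^{-i}\eta$ by a minimal $(n,\varepsilon)$-spanning number of $\Lambda$, so letting $n\to\infty$ and then $\varepsilon\to0$ gives $h_\mu(f,\xi)\le h_{\text{top}}(f|_\Lambda)$, and the supremum over $\xi$ finishes it.

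For the reverse inequality I would run Misiurewicz's argument. Fix $\varepsilon>0$; for each $n$ let $E_n\subset\Lambda$ be a maximal $(n,\varepsilon)$-separated set and set $s_n(\varepsilon)=|E_n|$, so that $h_{\text{top}}(f|_\Lambda)=\lim_{\varepsilon\to0}\limsup_n\frac1n\log s_n(\varepsilon)$. Put $\sigma_n=s_n(\varepsilon)^{-1}\sum_{x\in E_n}\delta_x$ and $\mu_n=\frac1n\sum_{k=0}^{n-1}f^k_*\sigma_n$, all probability measures on the compact $f$-invariant set $\Lambda$. Choose a subsequence $(n_j)$ along which $\frac{1}{n_j}\log s_{n_j}(\varepsilon)\to\limsup_n\frac1n\log s_n(\varepsilon)$ and $\mu_{n_j}\to\mu$ in the weak-$*$ topology; the standard Krylov--Bogolyubov estimate shows $\mu$ is $f$-invariant with $\supp(\mu)\subset\Lambda$. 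Now fix a finite Borel partition $\xi$ of $\Lambda$ with $\operatorname{diam}(\xi)<\varepsilon$ and with $\mu(\partial A)=0$ for every atom $A$ (possible since only countably many radii give spheres of positive $\mu$-measure), and write $\xi_n=\bigvee_{i=0}^{n-1}f^{-i}\xi$ and $\xi_q=\bigvee_{i=0}^{q-1}f^{-i}\xi$. Since $\operatorname{diam}(\xi)<\varepsilon$ and $E_n$ is $(n,\varepsilon)$-separated, distinct points of $E_n$ lie in distinct atoms of $\xi_n$, so $H_{\sigma_n}(\xi_n)=\log s_n(\varepsilon)$. Decomposing $\{0,\dots,n-1\}$ into the $q$ arithmetic progressions of step $q$ plus at most $2q$ leftover indices, subadditivity of $H_\nu(\cdot)$ over joins together with the concavity of $\nu\mapsto H_\nu(\xi_q)$ gives, for every $q$,
\[
\frac1n\log s_n(\varepsilon)\ \le\ \frac1q H_{\mu_n}(\xi_q)+\frac{2q\log k}{n},\qquad k:=\#\xi .
\]
Letting $n=n_j\to\infty$ and using that $\mu$ gives zero mass to the boundaries of the atoms of $\xi_q$ (so $H_{\mu_{n_j}}(\xi_q)\to H_\mu(\xi_q)$ by the portmanteau theorem), we obtain $\limsup_n\frac1n\log s_n(\varepsilon)\le\frac1q H_\mu(\xi_q)$ for all $q$, hence $\le h_\mu(f,\xi)\le h_\mu(f|_\Lambda)$. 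Thus for each $\varepsilon>0$ there is an invariant $\mu$ on $\Lambda$ with $h_\mu(f|_\Lambda)\ge\limsup_n\frac1n\log s_n(\varepsilon)$, and letting $\varepsilon\to0$ yields $h_{\text{top}}(f|_\Lambda)\le\sup\{h_\mu(f|_\Lambda)\colon\mu\ \text{invariant},\ \supp(\mu)\subset\Lambda\}$.

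Finally I would upgrade ``invariant'' to ``ergodic'' via the ergodic decomposition: writing $\mu=\int\mu_\omega\,d\tau(\omega)$ with each $\mu_\omega$ ergodic and supported on $\Lambda$, the affinity of the metric entropy map gives $h_\mu(f|_\Lambda)=\int h_{\mu_\omega}(f|_\Lambda)\,d\tau(\omega)$, so some ergodic component has entropy at least $h_\mu(f|_\Lambda)$; combined with the routine direction this gives the stated equality. The only genuinely non-routine point is the Misiurewicz block estimate above --- the bookkeeping of how $\xi_n$ splits over the $q$-shifted copies of $\xi_q$ and the two short end blocks, combined with the concavity of the entropy functional --- together with the selection of a partition of diameter $<\varepsilon$ whose atoms have $\mu$-null boundary; everything else reduces to a standard weak-$*$ compactness argument.
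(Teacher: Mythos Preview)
Your proof is correct and is essentially the standard Misiurewicz argument as presented in Walters' book. Note, however, that the paper does not actually give its own proof of this statement: it simply records the variational principle as a classical result and refers the reader to \cite{WALTERS}. So there is nothing to compare against beyond observing that your sketch reproduces precisely the proof the paper cites.
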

	The classical result shows that for an expansive map, the map $\mu\mapsto h_{\mu}(f|_{\Lambda})$ is upper semi-continuous. For this, one can refer to ~\cite{WALTERS}. Notice that expanding maps  are expansive. Hence one can find an ergodic measure to achieve the supreme.
	\subsection{Sub-additive topological pressure and super-additive topological pressure.}
	
	Consider a continuous transformation $f: X\to X$ of a compact metric space $X$ equipped with a metric $d$. Denote by $\mathcal{M}(f)$ the set of all $f$-invariant Borel probability measures on $X$. We recall that a sequence of continuous functions (potentials) $\Phi=\{\varphi_n\}_{n\geq1}$ is called {\it sub-additive} if
	$$\varphi_{m+n}(x)\leq \varphi_n(x)+ \varphi_m\circ f^n(x),\ \  \forall m,n\in\mathbb{N} , \forall x \in X.$$

	Now we state the definition of sub-additive topological pressure which was introduced in \cite{cfh}.  For $x,y\in X$ and $n\geq 0$, define the metric $d_n$ on $X$ by
	$$d_n(x, y) = \max\{d(f^i(x),f^i(y))|0\leq i<n\}.$$
	Given $\varepsilon>0$ and $n\geq 0$, denote by $B_n(x,\varepsilon)= \{y\in X |d_n(x,y)<\varepsilon\}$ the Bowen's ball centered at $x$ of radius $\varepsilon$ and length $n$. We say a subset $E\subset X$ is {\it $(n,\varepsilon)$-separated} if $d_n(x,y)\geq\varepsilon$ for any two distinct points $x,y\in E$.
	
	Given a sub-additive sequence of continuous potentials $\Phi=\{\varphi_n\}_{n\geq 1}$, let
	$$P_n(\Phi,\varepsilon)= \sup\left\{\sum_{x\in E}e^{\varphi_n(x)}|E \mbox{ is an }(n,\varepsilon) \mbox{-separated set of }X\right\}.$$
	The quantity
	$$P(f,\Phi) =\lim_{\varepsilon\to 0}\limsup_{n\to\infty}\frac{1}{n}\log P_n(\Phi,\varepsilon)$$
	is called {\it the sub-additive topological pressure of $\Phi$}. One can show that it satisfies the following variational principle (see Theorem 1.1 of \cite{cfh}):
	\begin{equation}\label{vp}P(f,\Phi) =\sup_{\mu\in \mathcal{M}(f)}\left\{h_\mu(f)+\mathcal{F}_*(\Phi,\mu)|\mathcal{F}_*(\Phi,\mu)\neq\infty\right\},\end{equation}
	where $h_\mu(f)$ is the metric entropy of $f$ with respect to $\mu$ and
	$$\mathcal{F}_*(\Phi,\mu)=\lim_{n\to\infty}\frac{1}{n}\int\varphi_nd\mu.$$
	Existence of the above limit can be shown by the standard sub-additive argument.
	
	Next we recall the definition of the super-additive topological pressure introduced in \cite{caopesinzhao2019}.
	
	Given a sequence of super-additive continuous potentials $\Psi=\{\psi_{n}\}_{n\geq 1}$, that is a sequence of continuous functions satisfying
	\begin{align*}
		\psi_{m+n}(x)\geq \psi_{n}(f^{m}(x))+\psi_{m}(x)
	\end{align*}
	for any $n,m\in\mathbb{N}$ and any $x\in X$. The {\it super-additive topological pressure of $\Psi$} is defined as
	\begin{align*}
		P_{\text{var}}(f,\Psi)\triangleq \sup \{h_{\mu}(f)+\mathcal{F}_{\ast}(\Psi,\mu):\mu\in \mathcal{M}(f)\},
	\end{align*}
	where $\mathcal{F}_{\ast}(\Psi,\mu)=\lim_{n\rightarrow \infty}\frac{1}{n}\int \psi_{n}d\mu=\sup \frac{1}{n}\int \psi_{n}d\mu$. The second equality is due to the standard sub-additive argument.

	\subsection{Singular-valued potentials.}\label{sub}
	
	Suppose $M$ is a $d$-dimensional compact Riemannian manifold and $f:M\rightarrow M$ is a $C^{1+\gamma}$ expanding map with a repeller $\Lambda$ such that $f|_{\Lambda}$ is topological transitive. Given $x\in M$ and $n\geq 1$, we consider the differentiable operator $D_{x}f^{n}: T_{x}M\rightarrow T_{f^{n}(x)}M$ and denote its singular values in the decreasing order by
	$$\alpha_{1}(x,f^{n})\geq\alpha_{2}(x,f^{n})\geq\cdots\geq \alpha_{d}(x,f^{n}).$$
	For $s\in [0,d]$, set
	$$\varphi^{s}(x,f^{n})\triangleq\sum^{d}_{i=d-[s]+1} \log \alpha_{i}(x,f^{n})+(s-[s])\log\alpha_{d-[s]}(x,f^{n})$$
	and
	\begin{displaymath}
		\psi^{s}(x,f^{n})=\sum_{i=1}^{[s]}\log \alpha_{i}(x,f^{n})+(s-[s])\log \alpha_{[s]+1}(x,f^{n}),
	\end{displaymath}
	where $[s]$ is the largest integer not exceeding $s$. Since $f$ is a $C^{1+\gamma}$ map, the functions $x\mapsto \alpha_{i}(x,f^{n})$, $x\mapsto \varphi^{s}(x,f^{n})$ and $x\mapsto \psi^{s}(x,f^{n})$ are continuous.  It is easy to check that for all $m,n \in \mathbb{N}$,
	$$\varphi^{s}(x,f^{m+n})\geq \varphi^{s}(x,f^{n})+\varphi^{s}(f^n(x),f^{m}),$$
	\begin{align*}
		\psi^{s}(x,f^{m+n})\leq \psi^{s}(x,f^{n})+\psi^{s}(f^{n}(x),f^{m}).
	\end{align*}
	Thus $\Phi_{f}(s)\triangleq \{-\varphi^{s}(\cdot,f^{n})\}_{n\geq 1}$ are sub-additive and $\Psi_{f}(s)=\{-\psi^{s}(\cdot,f^{n})\}_{n\geq 0}$ are super-additive. We call them the {\it sub-additive singular-valued potentials} and the {\it super-additive singular-valued potentials }respectively.

	\subsection{Carath$\acute{\text{e}}$odory singular dimension.}\label{C dimension} We recall the definition of the Carath$\acute{\text{e}}$odory singular dimension defined in~\cite{caopesinzhao2019}, which is based on the Carath$\acute{\text{e}}$odory construction described in~\cite{Pesin}.
	
	Let $\Lambda$ be a repeller for an expanding map $f$ and $\Phi=\{-\varphi^{\alpha}(\cdot,f^{n})\}_{n\geq 1}$ be the sub-additive singular-valued potential defined in  subsection \ref{sub}. For every $Z\subset \Lambda$, define
	\begin{align*}
		m(Z,\alpha,r)\triangleq \lim_{N\rightarrow \infty} \inf\{\sum_{i}\exp(\sup_{y\in B_{n_{i}}(x_{i},r)} -\varphi^{\alpha}(y,f^{n_{i}}))\},
	\end{align*}
	where the infimum is taken over all the collections $\{B_{n_{i}}(x_{i},r)\}$ of Bowen's balls with $x_{i}\in \Lambda$, $n_{i}\geq N$ those cover $Z$. One can show that there is a jump-up value
	\begin{align*}
		\dim_{C,r} Z\triangleq \inf\{\alpha:m(Z,\alpha,r)=0\}=\sup\{\alpha:m(Z,\alpha,r)=+\infty\},
	\end{align*}
	which is called {\it the Carath$\acute{\text{e}}$odory singular dimension of $Z$}. For the motivation of this definition, one can refer to \cite{caopesinzhao2019}.

	\subsection{Hausdorff dimension}
	Now we recall the definition of the Hausdorff dimension, one can refer to \cite{Pesin}.
	
	Let $X$ be a compact metric space equipped with a metric $d$. Given  a subset $Z$ of $X$, for $s\geq 0$ and $\delta>0$, define
	\[
	\mathcal{H}_{\delta}^{s}(Z):=\inf \left\{\sum_{i}|U_i|^s: \
	Z\subset \bigcup_{i}U_i,~|U_i|\leq \delta\right\}
	\]
	where $|\cdot|$ denotes the diameter of a set. The quantity
	$\mathcal{H}^{s}(Z):=\lim\limits_{\delta\rightarrow 0}\mathcal{H}_{\delta}^{s}(Z)$ is called the {\em $s-$dimensional Hausdorff measure} of $Z$. Define the {\em Hausdorff dimension} of $Z$, denoted by $\dim_H  Z$, as follows:
	\[
	\dim_H  Z =\inf \{s:\ \mathcal{H}^{s}(Z)=0\}=\sup \{s: \mathcal{H}^{s}(Z)=\infty\}.
	\]
	One can check that the Hausdorff dimension satisfies the monotonicity, that is, for $Y_{1}\subset Y_{2}\subset X$, we have ~$\dim_{H}Y_{1}\leq \dim_{H}Y_{2}$.

	Given a probability Borel measure $\mu$ on $X$, the {\it Hausdorff dimension of the measure $\mu$}  is defined as $$\dim_{H}\mu=\inf\{\dim_{H}Y:Y\subset X ,  \mu(Y)=1\}.$$ This quantity was introduced by Young in \cite{Young}.
	\subsection{Lower bound for the Hausdorff dimension of a non-conformal repeller.}
	Recently, Cao, Pesin and Zhao~\cite{caopesinzhao2019} gave a lower bound for the Hausdorff dimension of a non-conformal repeller, which is given by the root of the Bowen's equation involving the super-additive topological pressure of the super-additive singular valued potential. Take $\Psi_{f}(s)=\{-\psi^{s}(\cdot,f^{n})\}_{n\geq 0}$ as the super-additive singular-valued potentials and  $P_{\text{sup}}(s)=P_{\text{var}}(f|_{\Lambda},\Psi_{f}(s))$ as the super-additive topological pressure of the super-additive singular-valued potential.
	\begin{Thm}\cite{caopesinzhao2019}\label{lower bound}
		Let $\Lambda$ be a repeller for a $C^{1+\gamma}$ expanding map $f:M\rightarrow M$. Then
		\begin{align*}
			dim_{H}\Lambda\geq s^{\ast},
		\end{align*}
		where $s^{\ast}$ is the unique root of the equation $P_{\text{sup}}(s)=0$.
	\end{Thm}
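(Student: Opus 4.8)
The plan is to deduce the bound from the variational characterisation of $P_{\text{sup}}$ together with a geometric, mass-distribution argument on a well-chosen sub-repeller. Since $\dim_H$ is monotone and $s\mapsto P_{\text{sup}}(s)$ is strictly decreasing (on a repeller of an expanding map the logarithms of the singular values are uniformly bounded below), it is enough to prove $\dim_H\Lambda\ge t$ for an arbitrary $t<s^{\ast}$ and then let $t\uparrow s^{\ast}$. Fix such a $t$; then $P_{\text{sup}}(t)=P_{\text{var}}(f|_\Lambda,\Psi_f(t))>0$, so by the variational principle defining $P_{\text{sup}}$, the upper semicontinuity of $\mu\mapsto h_\mu(f|_\Lambda)$ (expanding maps are expansive), and ergodic decomposition, there is an ergodic $\mu$ with $h_\mu(f)+\mathcal{F}_\ast(\Psi_f(t),\mu)=:2\eta>0$. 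Applying Kingman's subadditive ergodic theorem to $\{\psi^t(\cdot,f^n)\}_{n}$ and, via the Oseledets theorem, to the sequences governing the individual singular values, one gets for $\mu$-a.e.\ $x$ that $\frac1n\log\alpha_i(x,f^n)\to\lambda_i(\mu)$ with $\lambda_1(\mu)\ge\cdots\ge\lambda_d(\mu)>0$, and $\frac1n\psi^t(x,f^n)\to\chi_t(\mu):=\sum_{i=1}^{[t]}\lambda_i(\mu)+(t-[t])\lambda_{[t]+1}(\mu)=-\mathcal{F}_\ast(\Psi_f(t),\mu)$, so that $h_\mu(f)-\chi_t(\mu)=2\eta$.

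Next I would pass to a uniform sub-repeller. Fix a Markov partition of $\Lambda$ of small diameter and let $\mathcal{R}^{(n)}$ denote its $n$-th dynamical refinement; since $f$ is $C^{1+\gamma}$, a bounded-distortion estimate shows that each $R\in\mathcal{R}^{(n)}$ containing a point $x$ is, with constants independent of $n$, comparable to the inverse-branch image $(D_xf^n)^{-1}(B(f^n x,r))$, i.e.\ to an ellipsoid whose semi-axes are comparable to $r\,\alpha_i(x,f^n)^{-1}$. Combining the Shannon--McMillan--Breiman theorem, Egorov's theorem applied to the convergences above, and a standard Katok-type selection, I would choose $N$ large, numbers $\beta_1\ge\cdots\ge\beta_d>0$ with $|\beta_i-\lambda_i(\mu)|<\eta$ (so that $\sum_{i\le[t]}\beta_i+(t-[t])\beta_{[t]+1}<h_\mu(f)-\eta$), and a finite subfamily $\mathcal{G}\subset\mathcal{R}^{(N)}$, contained in a set on which the Oseledets flag varies continuously, with $\#\mathcal{G}\ge e^{N(h_\mu-\eta)}$ and $\alpha_i(x,f^N)\in[e^{N(\beta_i-\eta)},e^{N(\beta_i+\eta)}]$ for every $R\in\mathcal{G}$, $x\in R$ and all $i$. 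Then $\Lambda':=\bigcap_{k\ge0}(f^N)^{-k}(\bigcup\mathcal{G})$ is a compact $f^N$-invariant sub-repeller conjugate to the full shift on $\#\mathcal{G}$ symbols, on which the derivative cocycle over $f^N$ is dominated, so that $\alpha_i(x,f^{kN})\in[e^{kN(\beta_i-2\eta)},e^{kN(\beta_i+2\eta)}]$ uniformly in $x\in\Lambda'$ and $k\ge1$.

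I would then run a mass-distribution argument on $\Lambda'$. Let $\nu$ be the uniform Bernoulli measure on $\Lambda'$, so each generation-$k$ cylinder (an element of $\mathcal{R}^{(kN)}$ meeting $\Lambda'$) has $\nu$-mass $(\#\mathcal{G})^{-k}\le e^{-kN(h_\mu-\eta)}$ and is comparable to an ellipsoid with semi-axes comparable to $r\,e^{-kN\beta_i}$, increasing in $i$. Given $x\in\Lambda'$ and $\rho$ small, pick the generation $k=k(x,\rho)$ for which $r\,e^{-kN\beta_{[t]+1}}\approx\rho$ (the cases with $t$ an integer being handled by the obvious conventions). Then the cylinder through $x$ is shorter than $\rho$ along its $[t]$ most expanded directions, of size about $\rho$ along the next, and longer than $\rho$ along the remaining ones; using bounded distortion and the Markov structure, $B(x,\rho)$ meets at most $\approx(\rho/r)^{[t]}e^{kN(\beta_1+\cdots+\beta_{[t]})}$ generation-$kN$ cylinders (one stacks only along the short directions). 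Hence $\nu(B(x,\rho))\le C(\rho/r)^{[t]}e^{kN(\beta_1+\cdots+\beta_{[t]}-h_\mu+\eta)}$, and substituting $kN\approx\log(r/\rho)/\beta_{[t]+1}$ gives $\nu(B(x,\rho))\le C'\rho^{t'}$ with $t'=[t]+\frac{h_\mu(f)-\eta-(\beta_1+\cdots+\beta_{[t]})}{\beta_{[t]+1}}\ge t$, the last inequality being exactly the slack in $h_\mu(f)-\chi_t(\mu)=2\eta$. Therefore the lower pointwise dimension of $\nu$ is $\ge t'\ge t$ everywhere on $\Lambda'$, so the mass-distribution principle yields $\dim_H\Lambda\ge\dim_H\Lambda'\ge t$, and letting $t\uparrow s^{\ast}$ finishes.

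The main obstacle is the geometric estimate of the third step: one must verify that the covering count returns precisely the exponent $t$ rather than a suboptimal quantity such as $h_\mu(f)/\lambda_1(\mu)$. This is exactly why $k$ must be chosen as a function of $\rho$ so that the $([t]+1)$-th smallest semi-axis of a generation-$kN$ cylinder matches $\rho$: only then does the covering exploit the full eccentricity of the (almost affine) cylinders, and making the stacking count rigorous with uniform constants is precisely where the bounded-distortion and Markov apparatus of the second step is needed. A related difficulty lies in that second step -- simultaneously almost maximising the entropy and pinning down all $d$ singular-value exponents along a genuine compact sub-repeller, where one must also control the singular values of the iterated cocycle (which do not simply multiply) by passing to a set on which domination, equivalently uniform Oseledets behaviour, holds.
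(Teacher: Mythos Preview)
The paper does not prove this theorem: it is quoted verbatim as a preliminary result from \cite{caopesinzhao2019}, with no proof supplied. Thus there is no in-paper argument to compare your proposal against.

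That said, your outline is essentially the strategy of the cited source. Cao--Pesin--Zhao first reduce to a sub-repeller carrying a $\{\lambda_j\}$-dominated splitting (via the Avila--Crovisier--Wilkinson approximation, the same \cite{ACW} used in the present paper's hidden hyperbolic section), then on that sub-repeller they run a mass-distribution argument using the \emph{Gibbs} equilibrium state for the now-additive potential $-\hat{\psi}^{s_1}(\cdot,f)$ built from the norms $\|D_xf|_{E_j}\|$, rather than the uniform Bernoulli measure you propose. The Gibbs route buys a cleaner bookkeeping: the measure of a cylinder is directly $\exp(-S_n\hat{\psi}^{s_1})$, so the covering count and the measure estimate combine into a single volume comparison without tracking entropy and exponents separately. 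Your Bernoulli approach is conceptually equivalent but forces you to juggle the inequalities $|\beta_i-\lambda_i(\mu)|<\eta$ and $h_\mu-\chi_t(\mu)=2\eta$ by hand; as written your claim ``$\sum_{i\le[t]}\beta_i+(t-[t])\beta_{[t]+1}<h_\mu(f)-\eta$'' needs $(t+1)\eta<2\eta$, so you must in fact take the tolerance on the $\beta_i$ much smaller than $\eta$ (say $\eta/(2d)$), a routine fix. The genuinely delicate points you flag---obtaining domination on the sub-repeller so that singular values of iterates behave multiplicatively, and making the anisotropic covering count rigorous---are exactly the ones the original proof handles, and your identification of them is correct.
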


	\subsection{The Carath$\acute{\text{e}}$odory singular dimension of a repeller}
	In~\cite{caopesinzhao2019}, Cao, Pesin and Zhao obtained that the Carath$\acute{\text{e}}$odory singular dimension of a repeller for a $C^{1+\gamma}$ map is exactly the unique root of the Bowen's equation involving the sub-additive topological pressure of the sub-additive singular valued potential. Take $\Phi_{f}(\alpha)=\{-\varphi^{\alpha}(\cdot,f^{n})\}_{n\geq 0}$ as the sub-additive singular valued potentials and denote by $P(f|_{\Lambda},\alpha)=P(f|_{\Lambda},\Phi_{f}(\alpha))$ the sub-additive topological pressure of the sub-additive singular-valued potential.
	\begin{Thm}\cite{caopesinzhao2019}\label{singular-valued dimension}
		Let $f: M\rightarrow M$ be a $C^{1+\gamma}$ expanding map with a repeller $\Lambda$. Assume that $f|_{\Lambda}$ is topologically transitive. Then $dim_{C,r} \Lambda=\alpha_{0}$ for all sufficiently small $r>0$, where $\alpha_{0}$ is the unique root of the Bowen's equation $P(f|_{\Lambda},\alpha)=0$.
	\end{Thm}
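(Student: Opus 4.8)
The plan is to sandwich $\dim_{C,r}\Lambda$ between copies of $\alpha_0$ from both sides, for all sufficiently small $r$, by matching the Carath\'eodory construction against the sub-additive pressure $P(f|_{\Lambda},\alpha)=P(f|_{\Lambda},\Phi_f(\alpha))$; this is the non-conformal counterpart of Bowen's equation for conformal repellers. I first record the structure of the pressure that makes $\alpha_0$ a genuine threshold. Since $f$ is uniformly expanding, every singular value of $D_xf^n$ is at least $\lambda^n$, so $s\mapsto\varphi^s(x,f^n)$ is continuous and piecewise linear with every slope at least $n\log\lambda$; hence $\varphi^{\alpha'}(x,f^n)-\varphi^\alpha(x,f^n)\ge(\alpha'-\alpha)n\log\lambda$ for $\alpha<\alpha'$, and therefore $\alpha\mapsto P(f|_{\Lambda},\alpha)$ is continuous and strictly decreasing. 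In particular $P(f|_{\Lambda},\alpha)>0$ for $\alpha<\alpha_0$ and $P(f|_{\Lambda},\alpha)<0$ for $\alpha>\alpha_0$, and the monotonicity of $s\mapsto\varphi^s$ makes $\alpha\mapsto m(\Lambda,\alpha,r)$ non-increasing. So it is enough to exhibit, for each $\alpha>\alpha_0$, some small $r$ with $m(\Lambda,\alpha,r)=0$, and for each $\alpha<\alpha_0$, all small $r$ with $m(\Lambda,\alpha,r)>0$.

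\emph{Upper bound.} Fix $\alpha>\alpha_0$. I would use the tempered distortion of the singular-valued potentials: since $f$ is $C^{1+\gamma}$ and expanding, there is $\rho(\varepsilon)\to0$ as $\varepsilon\to0$ with $|\varphi^\alpha(y,f^n)-\varphi^\alpha(x,f^n)|\le n\rho(\varepsilon)$ whenever $y\in B_n(x,\varepsilon)$ (this is established in \cite{caopesinzhao2019}). For each $n$ take a maximal $(n,r)$-separated set $E_n\subset\Lambda$; by maximality $\{B_n(x,r)\}_{x\in E_n}$ covers $\Lambda$, hence is admissible in the definition of $m(\Lambda,\alpha,r)$, and distortion gives
\begin{align*}
	\sum_{x\in E_n}\exp\Bigl(\sup_{y\in B_n(x,r)}-\varphi^\alpha(y,f^n)\Bigr)
	&\le e^{n\rho(r)}\sum_{x\in E_n}e^{-\varphi^\alpha(x,f^n)}\\
	&\le e^{n\rho(r)}\,P_n(\Phi_f(\alpha),r).
\end{align*}
Since $\varepsilon\mapsto\limsup_{n\to\infty}\frac1n\log P_n(\Phi_f(\alpha),\varepsilon)$ is non-increasing with limit $P(f|_{\Lambda},\alpha)<0$ as $\varepsilon\to0$, we have $\limsup_{n\to\infty}\frac1n\log P_n(\Phi_f(\alpha),r)\le P(f|_{\Lambda},\alpha)$; choosing $r$ so small that also $\rho(r)<-\tfrac12 P(f|_{\Lambda},\alpha)$, the displayed quantity tends to $0$ as $n\to\infty$. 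Hence $m(\Lambda,\alpha,r)=0$, so $\dim_{C,r}\Lambda\le\alpha$; letting $\alpha\downarrow\alpha_0$ gives $\dim_{C,r}\Lambda\le\alpha_0$ for all small $r$.

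\emph{Lower bound.} Fix $\alpha<\alpha_0$, so $P(f|_{\Lambda},\alpha)>0$. As $\mathcal{F}_*(\Phi_f(\alpha),\mu)$ is finite (indeed $-\varphi^\alpha(\cdot,f^n)$ takes values in $[-cn,0]$ for a suitable $c$) and $\mu\mapsto h_\mu(f)+\mathcal{F}_*(\Phi_f(\alpha),\mu)$ is affine, the variational principle \eref{vp} together with an ergodic decomposition yields an ergodic $\mu\in\mathcal{M}(f)$ supported on $\Lambda$ with $\eta:=h_\mu(f)+\mathcal{F}_*(\Phi_f(\alpha),\mu)>0$; set $F:=\mathcal{F}_*(\Phi_f(\alpha),\mu)$ and fix $\delta\in(0,\eta/2)$. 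By the Brin--Katok entropy formula and Egorov's theorem there are $r_0>0$, a set $G_1$ with $\mu(G_1)>0$, and $N_1$ with $\mu(B_n(z,r_0))\le e^{-n(h_\mu(f)-\delta)}$ for $z\in G_1$, $n\ge N_1$; by Kingman's sub-additive ergodic theorem applied to $\{-\varphi^\alpha(\cdot,f^n)\}$ and Egorov there are $G_2$ with $\mu(G_2)>0$ and $N_2$ with $-\varphi^\alpha(z,f^n)\ge n(F-\delta)$ for $z\in G_2$, $n\ge N_2$. Put $G:=G_1\cap G_2$ (of positive measure), $N_0:=\max(N_1,N_2)$, and take any $r\le r_0/2$. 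Let $\{B_{n_i}(x_i,r)\}$ be any admissible cover of $\Lambda$ with all $n_i\ge N_0$. For each $i$ with $B_{n_i}(x_i,r)\cap G\ne\varnothing$ pick $z_i$ in this intersection; then $B_{n_i}(x_i,r)\subset B_{n_i}(z_i,2r)\subset B_{n_i}(z_i,r_0)$ and $z_i\in B_{n_i}(x_i,r)$, so
\begin{align*}
	\mu\bigl(B_{n_i}(x_i,r)\cap G\bigr)&\le\mu\bigl(B_{n_i}(z_i,r_0)\bigr)\le e^{-n_i(h_\mu(f)-\delta)}\le e^{n_i(F-\delta)}\\
	&\le\exp\bigl(-\varphi^\alpha(z_i,f^{n_i})\bigr)\le\exp\Bigl(\sup_{y\in B_{n_i}(x_i,r)}-\varphi^\alpha(y,f^{n_i})\Bigr),
\end{align*}
where the middle inequality uses $2\delta\le\eta=h_\mu(f)+F$. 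Since $\bigcup_i B_{n_i}(x_i,r)\supset G$, summing over $i$ gives $\mu(G)\le\sum_i\exp\bigl(\sup_{y\in B_{n_i}(x_i,r)}-\varphi^\alpha(y,f^{n_i})\bigr)$, hence $m(\Lambda,\alpha,r)\ge\mu(G)>0$, so $\dim_{C,r}\Lambda\ge\alpha$; letting $\alpha\uparrow\alpha_0$ gives $\dim_{C,r}\Lambda\ge\alpha_0$ for all small $r$.

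Together the two bounds give $\dim_{C,r}\Lambda=\alpha_0$ for all sufficiently small $r$. The routine inputs are the pressure monotonicity and the covering bookkeeping; the substantive points are (i) the tempered-distortion estimate for $\varphi^\alpha$ on Bowen balls, which is where the $C^{1+\gamma}$ regularity is genuinely used, and (ii) the mass-distribution step in the lower bound: a cover of $\Lambda$ may use Bowen balls centered at arbitrary points of $\Lambda$, whereas the Brin--Katok and Kingman asymptotics are only uniform on a set $G$ of positive measure, and this mismatch must be absorbed by re-centering each ball of the cover at a point of $G$ at the cost of doubling its radius. I expect (ii) to be the main obstacle.
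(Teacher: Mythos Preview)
This theorem is not proved in the present paper: it is quoted in the preliminaries as a result of Cao, Pesin and Zhao~\cite{caopesinzhao2019} and then invoked as a black box in the proof of Theorem~\ref{expanding2}. There is therefore no in-paper argument to compare your proposal against.

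That said, your two-sided sandwich is the standard route and is largely sound; the lower bound via an ergodic near-equilibrium measure, Brin--Katok, Kingman and the re-centering trick is correct (and for positively expansive maps the Brin--Katok scale may be taken uniform: any $r$ below the expansivity constant yields the full $h_\mu(f)$, so the bound $\dim_{C,r}\Lambda\ge\alpha_0$ indeed holds for all such $r$, independently of $\alpha$). The upper bound, however, has a quantifier slip. With only the tempered estimate $|\varphi^\alpha(y,f^n)-\varphi^\alpha(x,f^n)|\le n\rho(r)$ you need $\rho(r)<-\tfrac12 P(f|_\Lambda,\alpha)$, so the admissible $r$ depends on $\alpha$; your sentence ``letting $\alpha\downarrow\alpha_0$ gives $\dim_{C,r}\Lambda\le\alpha_0$ for all small $r$'' does not follow, because $r(\alpha)\to0$ as $\alpha\downarrow\alpha_0$. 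What you actually obtain is $\limsup_{r\to0}\dim_{C,r}\Lambda\le\alpha_0$, which is weaker than the stated conclusion that the value is \emph{exactly} $\alpha_0$ for every sufficiently small fixed $r$. To close this gap you need a distortion bound on $\varphi^\alpha$ over Bowen balls that is uniform in $n$ (not merely $o(n)$); for $C^{1+\gamma}$ expanding maps this is available because, inside $B_n(x,r)$, the orbit separation $d(f^jx,f^jy)$ decays geometrically as $j$ decreases from $n-1$, making the H\"older increments of $Df$ summable. With $|\varphi^\alpha(y,f^n)-\varphi^\alpha(x,f^n)|\le C(r)$ uniformly in $n$, the factor $e^{n\rho(r)}$ becomes the harmless constant $e^{C(r)}$ and the upper bound gives $\dim_{C,r}\Lambda\le\alpha$ for \emph{every} $\alpha>\alpha_0$ and every small $r$, as required.
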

	\begin{Rem}
		As a result, we can use the notation $dim_{C}\Lambda$ to express the Carath$\acute{\text{e}}$odory singular dimension of the repeller $\Lambda$.
	\end{Rem}

	\subsection{Approximate of an hyperbolic measure by repellers} Results of the following type is usually referred to Katok~\cite{KH}.
	\begin{Thm}\label{approximate}
		Suppose $M$ is a $d-$dimensional Riemannian manifold. Let $f: M\rightarrow M$ be a $C^{1+\gamma}$ expanding map with a repeller $\Lambda$ and $\mu$ be an ergodic measure on $\Lambda$ with $h_{\mu}(f)>0$. Then for any $\varepsilon>0$, there exists an $f-$invariant compact subset $\Lambda_{\varepsilon}\subset\Lambda$ such that $h_{top}(f|_{\Lambda_{\varepsilon}})\geq h_{\mu}(f)-\varepsilon$ and for any $f-$invariant ergodic measure $\nu$ supported on $\Lambda_{\varepsilon}$, it holds that $|\lambda_{i}(\mu)-\lambda_{i}(\nu)|\leq\varepsilon$ for each $1\leq i\leq d$.
	\end{Thm}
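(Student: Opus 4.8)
The plan is to run a Katok-type ``horseshoe'' construction, adapted from hyperbolic measures for diffeomorphisms to expanding maps, so that the approximating set $\Lambda_{\varepsilon}$ is itself a sub-repeller of $\Lambda$ on which a power of $f$ is coded by a full shift. Write $\chi_{1}>\chi_{2}>\dots>\chi_{\ell}$ for the distinct Lyapunov exponents of $\mu$, with multiplicities $m_{1},\dots,m_{\ell}$ summing to $d$; since $f$ is expanding on $\Lambda$ all of them are positive, and because $D_{x}f$ is a linear isomorphism along orbits in $\Lambda$, the Oseledets theorem yields a $Df$-invariant measurable splitting $T_{x}M=E_{1}(x)\oplus\dots\oplus E_{\ell}(x)$ with $\dim E_{i}=m_{i}$ and Lyapunov exponent $\chi_{i}$ along $E_{i}$. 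Fix $\varepsilon>0$ and an auxiliary constant $\varepsilon'>0$, to be made small at the end.

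\emph{Step 1 (a good set of large measure).} Applying Egorov's and Lusin's theorems to the convergence in the Oseledets and Birkhoff theorems, I would produce a compact set $A\subset\Lambda$ with $\mu(A)>1-\varepsilon'$ and an integer $N_{0}$ such that: $x\mapsto E_{i}(x)$ is continuous on $A$, with the angle between $\bigoplus_{j\le p}E_{j}(x)$ and $\bigoplus_{j>p}E_{j}(x)$ bounded below on $A$; for all $x\in A$, $n\ge N_{0}$ and $i$ one has $\bigl|\tfrac1n\log\|D_{x}f^{n}|_{E_{i}(x)}\|-\chi_{i}\bigr|<\varepsilon'$ and the companion bound $\bigl|\tfrac1n\log\|(D_{x}f^{n}|_{E_{i}(x)})^{-1}\|^{-1}-\chi_{i}\bigr|<\varepsilon'$; and every orbit segment of length $\ge N_{0}$ starting in $A$ spends at least a $(1-\varepsilon')$-fraction of its time in $A$. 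Since $h_{\mu}(f)>0$, the Katok entropy formula provides $\delta>0$, below an expansivity constant of $f|_{\Lambda}$, so that for all large $n$ the set $A$ contains an $(n,\delta)$-separated subset of cardinality at least $e^{n(h_{\mu}(f)-\varepsilon)}$.

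\emph{Step 2 (building the sub-repeller).} An expanding map on $\Lambda$ is expansive and has the shadowing property. Fixing a tiny $\rho>0$ (much smaller than $\delta$) and its pseudo-orbit tracing constant, I would carry out the standard construction (cf.\ Katok, \cite{KH}): using orbit segments that start and end inside a fixed small ball contained in $A$ --- obtained from the first-return map to that ball together with the entropy estimate of Step 1 --- one concatenates them into pseudo-orbits, each $\rho$-shadowed by a \emph{unique} genuine orbit (uniqueness by expansiveness). The closure $\Lambda_{\varepsilon}$ of the set of these shadowing orbits is a compact $f$-invariant subset of $\Lambda$ carrying an essentially symbolic structure; the separation of the segments in the relevant Bowen metrics makes the coding injective, and the count of Step 1 gives $h_{\text{top}}(f|_{\Lambda_{\varepsilon}})\ge h_{\mu}(f)-2\varepsilon$. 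Shadowing and expansiveness make $\Lambda_{\varepsilon}$ locally maximal, hence a repeller. Running the argument from the start with $\varepsilon/2$ in place of $\varepsilon$ yields the entropy bound claimed, and by the variational principle and expansiveness it is attained by an ergodic measure on $\Lambda_{\varepsilon}$.

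\emph{Step 3, and the main obstacle.} It remains to show that every ergodic $\nu$ on $\Lambda_{\varepsilon}$ has Lyapunov exponents within $\varepsilon$ of those of $\mu$, and this is the delicate point: the exponents of a product of matrices each with well-controlled singular values need \emph{not} be controlled, since a rotation mixing the ``fast'' and ``slow'' blocks would ruin them. The remedy is that on $A$ we controlled not merely the rates but the splitting $E_{i}$ and its angles; then, provided $\rho$ and $\varepsilon'$ are chosen small enough \emph{after} $N$ is fixed, a cone-field/graph-transform argument using only uniform continuity of $Df$ shows that $\Lambda_{\varepsilon}$ inherits a $Df$-invariant dominated splitting $T_{x}M=F_{1}(x)\oplus\dots\oplus F_{\ell}(x)$ into continuous subbundles with $\dim F_{i}=m_{i}$, each $F_{i}$ uniformly close to $E_{i}$ along the shadowed orbit, and $\bigl|\tfrac1n\log\|D_{x}f^{n}|_{F_{i}(x)}\|-\chi_{i}\bigr|<\varepsilon$ together with the companion estimate, for $x\in\Lambda_{\varepsilon}$ and all large $n$. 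This forces every ergodic $\nu$ on $\Lambda_{\varepsilon}$ to have exponent $\chi_{i}$ up to $\varepsilon$ with multiplicity $m_{i}$, hence $|\lambda_{i}(\mu)-\lambda_{i}(\nu)|\le\varepsilon$ for each $1\le i\le d$. I expect the real work to be exactly this bookkeeping --- fixing $\varepsilon'$, $\delta$, $N$, $\rho$ and the radius of the neighbourhood of $A$ in the correct order so that the uniform Oseledets control on $A$ genuinely propagates, through the shadowing, to an honest invariant cone field on $\Lambda_{\varepsilon}$.
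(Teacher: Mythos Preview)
The paper does not supply its own proof of this statement: it is stated in Section~2 as a known result, with the single line ``Results of the following type is usually referred to Katok~\cite{KH}'' and no argument. So there is nothing to compare your proposal against in the paper itself; the author simply invokes the Katok horseshoe theorem as a black box (and, in the applications to Theorems~A and~B, tacitly uses the easy extra feature that $\Lambda_{\varepsilon}$ may be chosen disjoint from a prescribed point~$y$).

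Your sketch is the standard Katok-type construction and is essentially correct in outline; indeed the version with an induced dominated splitting on $\Lambda_{\varepsilon}$ is what is proved in \cite{ACW} and quoted (inside the commented-out block of the source) as Theorem~\ref{Katok}. One technical wrinkle worth flagging: your Step~1 asserts that invertibility of $D_{x}f$ already gives a genuine Oseledets \emph{splitting} $T_{x}M=\bigoplus E_{i}(x)$ rather than just the forward filtration. For a non-invertible endomorphism this is not automatic; one obtains the splitting by passing to the natural extension $(\hat\Lambda,\hat f,\hat\mu)$, running the two-sided Oseledets theorem there, and then pushing the construction down. With that adjustment (or, alternatively, by controlling singular values of $D_{x}f^{n}$ directly and using sub-/super-additivity instead of cone fields), your Steps~1--3 go through, and the order-of-quantifiers bookkeeping you identify in Step~3 is exactly the substantive work.
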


	\section{Proof of the main results}\label{proof}
	In this section, we present the proof for our main results. 
	\begin{Lem}\label{dim of measure}
		Suppose $f:M\rightarrow M$ is a $C^{1+\gamma}$ map on a $d-$dimensional Riemannian manifold and $\mu$ is an ergodic $f-$invariant measure which is absolutely continuous with the Lebesgue measure. Then $dim_{H} \mu=d$.
	\end{Lem}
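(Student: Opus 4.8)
The plan is to establish the two inequalities $\dim_{H}\mu\le d$ and $\dim_{H}\mu\ge d$ separately, the first being soft and the second coming from the fact that an absolutely continuous measure cannot be concentrated on a set of small Hausdorff dimension. For the upper bound, note that $M$ itself has full $\mu$-measure and, being a $d$-dimensional manifold, satisfies $\dim_{H}M=d$: cover $M$ by countably many charts, each bi-Lipschitz to an open subset of $\mathbb{R}^{d}$, and use the Lipschitz invariance and countable stability of Hausdorff dimension. Hence $\dim_{H}\mu=\inf\{\dim_{H}Y:\mu(Y)=1\}\le d$.

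For the reverse inequality, let $Y\subset M$ be an arbitrary Borel set with $\mu(Y)=1$. If the Riemannian volume $\mathrm{Leb}(Y)$ were zero, then $\mu\ll\mathrm{Leb}$ would force $\mu(Y)=0$, a contradiction; so $\mathrm{Leb}(Y)>0$. I would then pass through local charts, in which the Riemannian volume is comparable, up to bounded multiplicative constants, to Lebesgue measure on $\mathbb{R}^{d}$, and Lebesgue measure on $\mathbb{R}^{d}$ is a fixed constant multiple of the $d$-dimensional Hausdorff measure $\mathcal{H}^{d}$. This yields $\mathcal{H}^{d}(Y)>0$.

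It remains to deduce $\dim_{H}Y\ge d$ from $\mathcal{H}^{d}(Y)>0$, which is the standard scaling property of Hausdorff measures: for any $s<d$ and any $\delta>0$, each covering set $U$ with $|U|\le\delta$ satisfies $|U|^{s}=|U|^{s-d}|U|^{d}\ge\delta^{\,s-d}|U|^{d}$, so $\mathcal{H}_{\delta}^{s}(Y)\ge\delta^{\,s-d}\mathcal{H}_{\delta}^{d}(Y)$; letting $\delta\to0$ forces $\mathcal{H}^{s}(Y)=+\infty$, since $\delta^{\,s-d}\to+\infty$ while $\mathcal{H}_{\delta}^{d}(Y)\to\mathcal{H}^{d}(Y)>0$. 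Hence $\dim_{H}Y\ge d$ for every Borel set $Y$ of full $\mu$-measure, so $\dim_{H}\mu\ge d$, and combined with the upper bound this gives $\dim_{H}\mu=d$.

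I do not expect a genuine obstacle: the only mildly technical ingredient is the local comparison of Riemannian volume with $\mathcal{H}^{d}$, which is routine once $M$ is covered by finitely many charts on which the metric is bi-Lipschitz equivalent to the Euclidean one. I would also remark that this lemma uses neither the $C^{1+\gamma}$ regularity of $f$ nor the ergodicity of $\mu$; it holds for every absolutely continuous Borel probability measure on $M$, and these hypotheses are recorded only because that is the setting in which the lemma will be applied.
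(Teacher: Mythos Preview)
Your proof is correct and follows the same approach as the paper: show that any set of full $\mu$-measure has positive Lebesgue measure by absolute continuity, and hence Hausdorff dimension $d$. The paper's proof is considerably terser, simply asserting that $\mathrm{Leb}(B)>0$ implies $\dim_{H}B=d$ without the intermediate justification via $\mathcal{H}^{d}$ that you supply; your added remark that neither the regularity of $f$ nor ergodicity is actually used is also accurate.
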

	\begin{proof}
		For any measurable set $B$ with $\mu(B)=1$, by the absolute continuity of the measure $\mu$, we have $Leb(B)>0$. And hence $\dim_{H} B=d$. By the definition of $\dim_{H} \mu$, we get that $\dim_{H} \mu=d$.
	\end{proof}
	\begin{proof}[\textbf{Proof of Theorem~\ref{expanding}}]
		Since $f$ is expanding on $M$, there is an ergodic measure $\mu$ absolutely continuous with Lebesgue measure. Therefore, the Pesin's entropy formula ~\cite{BP} holds, that is
		\begin{align*}
			h_{\mu}(f)-\sum^{d}_{i=1} \lambda_{i}(\mu)=0.
		\end{align*}
		Hence, $$h_{\mu}(f)-\sum^{d-1}_{i=1} \lambda_{i}(\mu)=\lambda_{d}(\mu)>0.$$
		
		Fix $0<\varepsilon<\frac{\lambda_{d}(\mu)}{d}$ and $y\in M$, by Theorem~\ref{approximate}, there exists a repeller $\Lambda_{\varepsilon}$ satisfying $\Lambda_{\varepsilon}\cap \{y\}=\varnothing$ such that
		$$h_{\text{top}}(f|_{\Lambda_{\varepsilon}})\geq h_{\mu}(f)-\varepsilon$$
		and for any invariant ergodic measure $\nu$ supported on $\Lambda_{\varepsilon}$, we have
		$$|\lambda_{i}(\mu)-\lambda_{i}(\nu)|\leq \varepsilon\quad \text{for} \quad 1\leq i\leq d.$$
		Notice that $\Lambda_{\varepsilon}\cap \{y\}=\varnothing$ implies that $\Lambda_{\varepsilon}\subset E^{+}_{f}(y)$. Since $f$ is expanding on $\Lambda_{\varepsilon}$, there exists an invariant ergodic measure $\nu$ supported on $\Lambda_{\varepsilon}$ such that $$h_{\nu}(f|_{\Lambda_{\varepsilon}})=h_{\text{top}}(f|_{\Lambda_{\varepsilon}}).$$ Denote by $s^{\ast}$ the unique root of the equation $P_{\text{sup}}(s)=0$. By the definition of the super-additive pressure, we have
		\begin{equation}\label{entropy}
			h_{\nu}(f|_{\Lambda_{\varepsilon}})-\sum_{i=1}^{[s^{\ast}]}\lambda_{i}(\nu)-(s^{\ast}-[s^{\ast}])\lambda_{[s^{\ast}]+1}(\nu)\leq 0.
		\end{equation}
		By Theorem~\ref{approximate}, we have
		\begin{align*}
			\lambda_{d}(\mu)&=h_{\mu}(f)-\sum^{d-1}_{i=1} \lambda_{i}(\mu)\\
			&\leq h_{\nu}(f|_{\Lambda_{\varepsilon}})+\varepsilon-\sum_{i=1}^{d-1}\lambda_{i}(\nu)+(d-1)\varepsilon\\
			&<h_{\nu}(f|_{\Lambda_{\varepsilon}})-\sum_{i=1}^{d-1}\lambda_{i}(\nu)+\lambda_{d}(\mu).
		\end{align*}
		Thus, $$h_{\nu}(f|_{\Lambda_{\varepsilon}})-\sum_{i=1}^{d-1}\lambda_{i}(\nu)>0.$$ Combining this with (\ref{entropy}), we get that $s^{\ast}> d-1$. On the other hand, by Theorem~\ref{lower bound} and the monotonicity of the Hausdorff dimension, we have $$d\geq \dim_{H}E^{+}_{f}(y)\geq \dim_{H} \Lambda_{\varepsilon}\geq s^*.$$
		If $s^{\ast}=d$, there is nothing to prove. If $s^{\ast}<d$, then by Theorem~\ref{lower bound}, Theorem \ref{approximate} and (\ref{entropy}), we have
		\begin{align*}
			\dim_{H} \Lambda_{\varepsilon}&\geq s^{\ast}\\
			&\geq \frac{h_{\nu}(f|_{\Lambda_{\varepsilon}})-\sum_{i=1}^{[s^{\ast}]}\lambda_{i}(\nu)+[s^{\ast}]\lambda_{[s^{\ast}]+1}(\nu)}{\lambda_{[s^{\ast}]+1}(\nu)}\\
			&\geq \frac{h_{\mu}(f)-\varepsilon-\sum_{i=1}^{[s^{\ast}]}\lambda_{i}(\mu)-[s^{\ast}]\varepsilon+[s^{\ast}]\lambda_{[s^{\ast}]+1}(\mu)-[s^{\ast}]\varepsilon}{\lambda_{[s^{\ast}]+1}(\mu)+\varepsilon}\\
			&=\frac{([s^{\ast}]+1)\lambda_{[s^{\ast}]+1}(\mu)-(2[s^{\ast}]+1)\varepsilon}{\lambda_{[s^{\ast}]+1}(\mu)+\varepsilon}\\
			&=d-\frac{(3d-1)\varepsilon}{\lambda_{d}(\mu)+\varepsilon}.
		\end{align*}
		By the monotonicity of the Hausdorff dimension, we have $$d\geq \dim_{H}E^{+}_{f}(y)\geq \dim_{H} \Lambda_{\varepsilon}\geq d-\frac{(3d-1)\varepsilon}{\lambda_{d}(\mu)+\varepsilon}.$$ The arbitrariness of $\varepsilon>0$ yields that $\dim_{H}E^{+}_{f}(y)=d$. By Lemma \ref{dim of measure}, we finish the proof.

	\end{proof}
	\begin{proof}[\textbf{Proof of Theorem~\ref{expanding2}}]
		Denote by $\alpha_{0}$ the unique root of $P(f|_{\Lambda},\alpha)=0$. By the variational principle of the sub-additive topological pressure and $f$ is expanding on $\Lambda$, there exists an ergodic measure $\mu^{\ast}$ such that
		\begin{align}\label{Pesin f1}
			h_{\mu^{\ast}}(f|_{\Lambda})-\sum^{d}_{i=d-[\alpha_{0}]+1}\lambda_{i}(\mu^{\ast})-(\alpha_{0}-[\alpha_{0}])\lambda_{d-[\alpha_{0}]}(\mu^{\ast})=0.
		\end{align}
		By Theorem~\ref{singular-valued dimension}, $\dim_{C} \Lambda=\alpha_{0}$.
		
		If $\alpha_{0}$ is an integer, then fix $0<\varepsilon<\frac{\lambda_{d-[\alpha_{0}]+1}(\mu^{\ast})}{[\alpha_{0}]}$, else fix $0<\varepsilon< \frac{(\alpha_{0}-[\alpha_{0}])\lambda_{d-[\alpha_{0}]}(\mu^{\ast})}{[\alpha_{0}]+1}$.
		For such an  $\varepsilon>0$ and any $y\in \Lambda$, by Theorem ~\ref{approximate}, there exists an $f-$invariant compact set $\Lambda_{\varepsilon}\subset \Lambda$ satisfying $\Lambda_{\varepsilon}\cap \{y\}=\varnothing$ such that $$h_{\text{top}}(f|_{\Lambda_{\varepsilon}})\geq h_{\mu^{\ast}}(f|_{\Lambda})-\varepsilon$$ and for any invariant measure $\nu$ supported on $\Lambda_{\varepsilon}$, we have $$|\lambda_{i}(\nu)-\lambda_{i}(\mu^{\ast})|\leq \varepsilon \quad \text{for} \quad 1\leq i\leq d.$$ Notice that $\Lambda_{\varepsilon}\subset E^{+}_{f|_{\Lambda}}(y)$. Since $f$ is expanding on $\Lambda_{\varepsilon}$, there exists an ergodic measure $\nu^{\ast}$ such that $$h_{\text{top}}(f|_{\Lambda_{\varepsilon}})=h_{\nu^{\ast}}(f|_{\Lambda_{\varepsilon}}).$$ Denote by $\alpha_{0}(\varepsilon)$ the unique root of $P(f|_{\Lambda_{\varepsilon}},\alpha)=0$.
		Then by the variational principle of the sub-additive topological pressure, we have
		\begin{equation}\label{entropy formula}
			h_{\nu^{\ast}}(f|_{\Lambda_{\varepsilon}})-\sum^{d}_{i=d-[\alpha_{0}(\varepsilon)]+1}\lambda_{i}(\nu^{\ast})-(\alpha_{0}(\varepsilon)-[\alpha_{0}(\varepsilon)])\lambda_{d-[\alpha_{0}(\varepsilon)]}(\nu^{\ast})\leq 0.
		\end{equation}
		If $\alpha_{0}$ is not an integer, then by Theorem \ref{approximate}, (\ref{Pesin f1}) and (\ref{entropy formula}), we have
		\begin{align*}
			(\alpha_{0}-[\alpha_{0}])\lambda_{d-[\alpha_{0}]}(\mu^{\ast})&=h_{\mu^{\ast}}(f|_{\Lambda})-\sum^{d}_{i=d-[\alpha_{0}]+1}\lambda_{i}(\mu^{\ast})\\
			&\leq h_{\nu^{\ast}}(f|_{\Lambda_{\varepsilon}})+\varepsilon-\sum^{d}_{i=d-[\alpha_{0}]+1}\lambda_{i}(\nu^{\ast})+[\alpha_{0}]\varepsilon\\
			&< h_{\nu^{\ast}}(f|_{\Lambda_{\varepsilon}})-\sum^{d}_{i=d-[\alpha_{0}]+1}\lambda_{i}(\nu^{\ast})+(\alpha_{0}-[\alpha_{0}])\lambda_{d-[\alpha_{0}]}(\mu^{\ast}).
		\end{align*}
		Combining this with (\ref{entropy formula}), we have \begin{align*}
			\sum^{d}_{i=d-[\alpha_{0}]+1}\lambda_{i}(\nu^{\ast})<\sum^{d}_{i=d-[\alpha_{0}(\varepsilon)]+1}\lambda_{i}(\nu^{\ast})+(\alpha_{0}(\varepsilon)-[\alpha_{0}(\varepsilon)])\lambda_{d-[\alpha_{0}(\varepsilon)]}(\nu^{\ast}).
		\end{align*}
		Therefore,
		$[\alpha_{0}]< \alpha_{0}(\varepsilon)$.
		On the other hand, by the monotonicity of the Carath$\acute{\text{e}}$odory singular dimension, we have $\dim_{C}\Lambda_{\varepsilon}\leq \dim_{C}\Lambda$, i.e. $\alpha_{0}(\varepsilon)\leq \alpha_{0}$. Hence, we have $[\alpha_{0}]=[\alpha_{0}(\varepsilon)]$.
		
		Therefore, by Theorem \ref{singular-valued dimension},  Theorem \ref{approximate}, (\ref{Pesin f1}) and (\ref{entropy formula}), we have
		\begin{align*}
			\text{dim}_{C}\Lambda_{\varepsilon}&=\alpha_{0}(\varepsilon)\geq\frac{h_{\nu^{\ast}}(f|_{\Lambda_{\varepsilon}})-\sum^{d}_{i=d-[\alpha_{0}(\varepsilon)]+1}\lambda_{i}(\nu^{\ast})+[\alpha_{0}(\varepsilon)]\lambda_{d-[\alpha_{0}(\varepsilon)]}(\nu^{\ast})}{\lambda_{d-[\alpha_{0}(\varepsilon)]}(\nu^{\ast})}\\
			&\geq\frac{h_{\mu^{\ast}}(f|_{\Lambda})-\varepsilon-\sum^{d}_{i=d-[\alpha_{0}]+1}\lambda_{i}(\mu^{\ast})-[\alpha_{0}]\varepsilon+[\alpha_{0}]\lambda_{d-[\alpha_{0}]}(\mu^{\ast})-[\alpha_{0}]\varepsilon}{\lambda_{d-[\alpha_{0}]}(\mu^{\ast})+\varepsilon}\\
			&=\alpha_{0}-\frac{(\alpha_{0}+2[\alpha_{0}]+1)\varepsilon}{\lambda_{d-[\alpha_{0}]}(\mu^{\ast})+\varepsilon}.
		\end{align*}
		By the monotonicity of the Carath$\acute{\text{e}}$odory singular dimension, we have $$\dim_{C}\Lambda\geq \dim_{C} E^{+}_{f|_{\Lambda}}(y)\geq \dim_{C}\Lambda_{\varepsilon}\geq \alpha_{0}-\frac{(\alpha_{0}+2[\alpha_{0}]+1)\varepsilon}{\lambda_{d-[\alpha_{0}]}(\mu^{\ast})+\varepsilon}.$$ Let $\varepsilon$ tend to $0$. This yields the result.
		
		If $\alpha_{0}$ is  an integer, then by Theorem \ref{approximate}, (\ref{Pesin f1}) and (\ref{entropy formula}), we have
		\begin{align*}
			\lambda_{d-[\alpha_{0}]+1}(\mu^{\ast})&=h_{\mu^{\ast}}(f|_{\Lambda})-\sum^{d}_{i=d-[\alpha_{0}]+2}\lambda_{i}(\mu^{\ast})\\
			&\leq h_{\nu^{\ast}}(f|_{\Lambda_{\varepsilon}})+\varepsilon-\sum^{d}_{i=d-[\alpha_{0}]+2}\lambda_{i}(\nu^{\ast})+([\alpha_{0}]-1)\varepsilon\\
			&< h_{\nu^{\ast}}(f|_{\Lambda_{\varepsilon}})-\sum^{d}_{i=d-[\alpha_{0}]+2}\lambda_{i}(\nu^{\ast})+\lambda_{d-[\alpha_{0}]+1}(\mu^{\ast}).
		\end{align*}
		Combining this with ~(\ref{entropy formula}), we have 
		\begin{align*}
			\sum^{d}_{i=d-[\alpha_{0}]+2}\lambda_{i}(\nu^{\ast})<\sum^{d}_{i=d-[\alpha_{0}(\varepsilon)]+1}\lambda_{i}(\nu^{\ast})+(\alpha_{0}(\varepsilon)-[\alpha_{0}(\varepsilon)])\lambda_{d-[\alpha_{0}(\varepsilon)]}(\nu^{\ast}).
		\end{align*}
		
		Therefore, $[\alpha_{0}]-1< \alpha_{0}(\varepsilon)$. Since $\alpha_0(\varepsilon)\leq \alpha_0$ and $\alpha_{0}$ is  an integer, we have $\alpha_0-1<\alpha_0(\varepsilon)\leq \alpha_0$. Hence $\alpha=\alpha_{0}(\varepsilon)$ or $[\alpha_{0}]= [\alpha_{0}(\varepsilon)]+1$.
		
		If $\alpha_{0}=\alpha_{0}(\varepsilon)$, then the result follows from the monotonicity of the Carath$\acute{\text{e}}$odory singular dimension.
		
		If $[\alpha_{0}]=[\alpha_{0}(\varepsilon)]+1$, then by Theorem \ref{singular-valued dimension},  Theorem \ref{approximate}, (\ref{Pesin f1}) and (\ref{entropy formula}), we have
		\begin{align*}
			&\dim_{C}\Lambda_{\varepsilon}=\alpha_{0}(\varepsilon)\geq\frac{h_{\nu^{\ast}}(f|_{\Lambda_{\varepsilon}})-\sum^{d}_{i=d-[\alpha_{0}(\varepsilon)]+1}\lambda_{i}(\nu^{\ast})+[\alpha_{0}(\varepsilon)]\lambda_{d-[\alpha_{0}(\varepsilon)]}(\nu^{\ast})}{\lambda_{d-[\alpha_{0}(\varepsilon)]}(\nu^{\ast})}\\
			&\geq\frac{h_{\mu^{\ast}}(f|_{\Lambda})-\varepsilon-\sum^{d}_{i=d-[\alpha_{0}(\varepsilon)]+1}\lambda_{i}(\mu^{\ast})-[\alpha_{0}(\varepsilon)]\varepsilon+[\alpha_{0}(\varepsilon)]\lambda_{d-[\alpha_{0}(\varepsilon)]}(\mu^{\ast})-[\alpha_{0}(\varepsilon)]\varepsilon}{\lambda_{d-[\alpha_{0}(\varepsilon)]}(\mu^{\ast})+\varepsilon}\\
			&=\frac{h_{\mu^{\ast}}(f|_{\Lambda})-\sum^{d}_{i=d-[\alpha_{0}]+1}\lambda_{i}(\mu^{\ast})-(2[\alpha_{0}]-1)\varepsilon+[\alpha_{0}]\lambda_{d-[\alpha_{0}]+1}(\mu^{\ast})}{\lambda_{d-[\alpha_{0}]+1}(\mu^{\ast})+\varepsilon}\\ &=\alpha_{0}-\frac{(3[\alpha_{0}]-1)\varepsilon}{\lambda_{d-[\alpha_{0}]+1}(\mu^{\ast})+\varepsilon}.
		\end{align*}
		
		The monotonicity of the Carath$\acute{\text{e}}$odory singular dimension and the arbitrariness of  $\varepsilon>0$ imply the result.
	\end{proof}

\end{document}